\newtheorem{theorem}{Theorem}
\DeclareMathOperator{\sign}{sgn}
\newcommand{\STS@text}[1]{{\rm STS}$(#1)$}
\newcommand{\STS@unstarred}[1]{\ifmmode{\text{\mbox{\STS@text{#1}}}}\else{\mbox{\STS@text{#1}}}\fi}
\newcommand{\STS@starred}[1]{\ifmmode{\text{\mbox{sub-\STS@text{#1}}}}\else{\mbox{sub-\STS@text{#1}}}\fi}
\newcommand{\STSS@unstarred}[1]{\ifmmode{\text{\mbox{\STS@text{#1}s}}}\else{\mbox{\STS@text{#1}s}}\fi}
\newcommand{\STSS@starred}[1]{\ifmmode{\text{\mbox{sub-\STS@text{#1}s}}}\else{\mbox{sub-\STS@text{#1}s}}\fi}
\newcommand{\STS}{\@ifstar{\STS@starred}{\STS@unstarred}}
\newcommand{\STSS}{\@ifstar{\STSS@starred}{\STSS@unstarred}}
\newcommand{\ConfPasch}{\includegraphics[page=1]{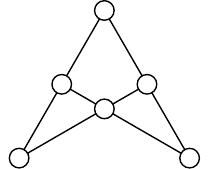}}
\newcommand{\ConfMitre}{\includegraphics[page=2]{images.pdf}}
\newcommand{\ConfFanoLine}{\includegraphics[page=3]{images.pdf}}
\newcommand{\ConfCrown}{\includegraphics[page=4]{images.pdf}}
\newcommand{\ConfHexagon}{\includegraphics[page=5]{images.pdf}}
\newcommand{\ConfPrism}{\includegraphics[page=6]{images.pdf}}
\newcommand{\ConfGrid}{\includegraphics[page=7]{images.pdf}}
\newcommand{\ConfFano}{\includegraphics[page=8]{images.pdf}}
\newcommand{\ConfMobiusKantor}{\includegraphics[page=9]{images.pdf}}
\newcommand{\ImgPasch}{\includegraphics[page=10]{images.pdf}}
\newcommand{\ImgMitre}{\includegraphics[page=11]{images.pdf}}
\newcommand{\ImgFanoLine}{\includegraphics[page=12]{images.pdf}}
\newcommand{\ImgCrown}{\includegraphics[page=13]{images.pdf}}
\newcommand{\ImgHexagon}{\includegraphics[page=14]{images.pdf}}
\newcommand{\ImgPrism}{\includegraphics[page=15]{images.pdf}}
\newcommand{\ImgGrid}{\includegraphics[page=16]{images.pdf}}
\newcommand{\ImgFano}{\includegraphics[page=17]{images.pdf}}
\newcommand{\ImgMobiusKantor}{\includegraphics[page=18]{images.pdf}}
\begin{document}

\title{Constructing Random Steiner Triple Systems: An Experimental Study}

\author{
Daniel Heinlein\thanks{Supported by the Academy of Finland, Grant 331044.}\phantom{ }
and Patric R. J. \"Osterg\aa rd\\
Department of Information and Communications Engineering\\
Aalto University School of Electrical Engineering\\
P.O.\ Box 15400, 00076 Aalto, Finland\\
\tt \{daniel.heinlein,patric.ostergard\}@aalto.fi
}

\date{}

\maketitle

\begin{center}{\sl Dedicated to Doug Stinson on the occasion of his 66th birthday.}\end{center}

\abstract{Several methods for generating random Steiner triple systems (STSs) have been proposed
in the literature, such as Stinson's hill-climbing algorithm and Cameron's algorithm, but
these are not yet completely understood.
Those algorithms,
as well as some variants, are here assessed for STSs of both small and large orders.
For large orders, the number of occurrences of certain
configurations in the constructed STSs are compared
with the corresponding expected values of random hypergraphs.
Modifications of the algorithms are proposed.}

\section{Introduction}\label{sec:introduction}

Random generation of combinatorial structures is a problem that has been extensively
studied both from a theoretical~\cite{J,JVV} and a practical~\cite{C0,HDM,S0} point of view.
When developing practical algorithms for random generation of structures with tight constraints,
it can be far from clear how amenable central measures such as computation
time and uniformity are to formal study. For the smallest parameters, structures are completely
understood, but both the algorithms and the distribution of structures
can behave differently for small parameters than for large ones.
On the other hand, for large parameters where the algorithms are truly needed,
it is challenging to evaluate the performance.

Steiner triple systems (STSs) form one class of structures for which practical random
generation algorithms have been developed but where an evaluation for large
parameters is lacking, primarily due to the difficulty of judging performance.
In this experimental study, we start by considering the case of Steiner triple
systems with small orders and carry out evaluations of known algorithms,
including some variants. For larger orders, we bring together random generation
algorithms and a conjecture on substructures of Steiner triple systems obtained
using a random model.
In this paper, the desired distribution for the random generation of STSs is uniform among all
\emph{labeled STSs} of a fixed order.

In 1985, Stinson~\cite{S0} published his seminal \emph{hill-climbing} approach to generate random STSs.
By exploring generated STSs with order 15, Stinson observed that STSs with large automorphism groups are underrepresented.
As noted in~\cite{S0}, modifications of the algorithm are able to create Latin squares and strong starters and to complete partial STSs to STSs.

Heap, Danziger, and Mendelsohn~\cite{HDM} studied the number of occurrences of substructures in STSs generated with Stinson's algorithm and reported that STSs with many Pasch configurations
are underrepresented for orders up to 19
and STSs with many mitre configurations
are overrepresented for order 15.
Two variations of Stinson's algorithm based on performing Pasch trades are
also proposed in~\cite{HDM}.

Random generation of Latin squares based on \emph{Markov chains} by Jacobson and
Matthews~\cite{JM} inspired Cameron~\cite{C0} to design a related algorithm for STSs.
We here call the algorithm for STSs \emph{Cameron's algorithm}, but recognize that it could
also be called the \emph{Jacobson--Matthews--Cameron algorithm}.

The paper is organized as follows. In Section~\ref{sec:preliminaries},
Steiner triple systems and related structures and concepts are defined,
and the topic of generating random Steiner triple systems is introduced.
In Section~\ref{sec:algo}, Stinson's algorithm and Cameron's algorithm,
as well as some variants, are described. Finally,
computational experiments are carried out and analyzed in Section~\ref{sec:results}
and the paper is concluded in Section~\ref{sec:conclusions}.

\section{Preliminaries}\label{sec:preliminaries}

\subsection{Steiner Triple Systems and Configurations}\label{sect:switch}

A \emph{Steiner triple system} (STS) is a pair $(V,\mathcal{B})$,
where $V$ is a set of \emph{points} and $\mathcal{B}$ is a
set of \mbox{3-subsets} of points, called \emph{blocks}, such that
every \mbox{2-subset} of points occurs in exactly one block. The size of
the point set is the \emph{order} of the Steiner triple system, and a
Steiner triple system of order $v$ is denoted by \STS{v}.
An \STS{v} exists iff
\begin{align*}
v \equiv 1\text{ or }3\!\!\!\pmod{6}.
\end{align*}
Direct calculation gives that an
\STS{v} has $v(v-1)/6$ blocks and each point is in $(v-1)/2$ blocks.
More information about Steiner triple systems can be found in~\cite{C,CR}.

A \emph{transversal design}
$\operatorname{TD}(k,n)$
is a triple
$(V,\mathcal{G},\mathcal{B})$, where $V$ is a set of $kn$ elements;
$\mathcal{G}$ is a partition of $V$ into $k$ \mbox{$n$-subsets} (called \emph{groups});
$\mathcal{B}$ is a collection of \mbox{$k$-subsets} of $V$ (called \emph{blocks});
and every \mbox{2-subset} of $V$ is contained either in exactly one group
or in exactly one block, but not both.

A \emph{configuration} is a pair $(V',\mathcal{B'})$, where
$V'$ is a set of points and $\mathcal{B'}$ is a collection of
subsets of points---typically called \emph{lines} but here called
\emph{blocks} to comply with other definitions---such that
every 2-subset of points occurs in at most one block.

A configuration with the property that each point in
$V'$ occurs in at least two blocks is said to be \emph{full}.
A configuration with $w$ points, $w$ blocks, each block
containing $k$ points, and each point occurring in
$k$ blocks is called a $w_k$ configuration.  Throughout the paper,
we assume block size 3.
In the context of building up an STS block by block, the configuration
consisting of the blocks that have been obtained so far is called
a \emph{partial STS}. It is possible that a partial STS cannot
be completed to an STS.

The configurations considered in this paper are precisely the
full $n$-block configurations with $n \le 6$ and the $w_3$ configurations with
$w \le 8$, all of which are depicted in Figure~\ref{fig:all_configurations}.

\begin{figure}[htbp]
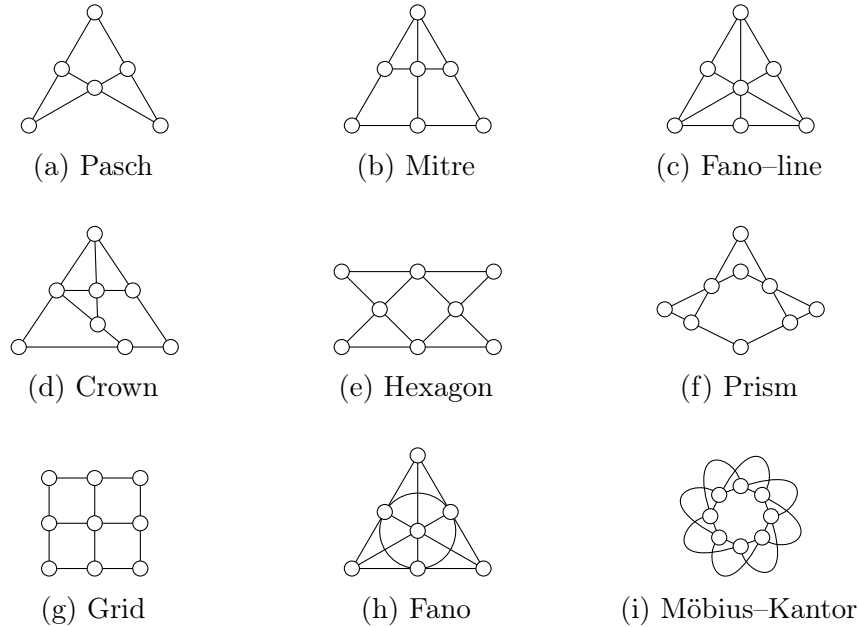

\vspace{5mm}
\centering
\begin{subfigure}[t]{0.3\textwidth}
\centering
\ConfPasch
\caption{Pasch}\label{sf:pasch}
\end{subfigure}
\begin{subfigure}[t]{0.3\textwidth}
\centering
\ConfMitre
\caption{Mitre}\label{sf:mitre}
\end{subfigure}
\begin{subfigure}[t]{0.3\textwidth}
\centering
\ConfFanoLine
\caption{Fano--line}\label{sf:fanoline}
\end{subfigure}
\vskip 0.5cm
\begin{subfigure}[t]{0.3\textwidth}
\centering
\ConfCrown
\caption{Crown}\label{sf:crown}
\end{subfigure}
\begin{subfigure}[t]{0.3\textwidth}
\centering
\ConfHexagon
\caption{Hexagon}\label{sf:hexagon}
\end{subfigure}
\begin{subfigure}[t]{0.3\textwidth}
\centering
\ConfPrism
\caption{Prism}\label{sf:prism}
\end{subfigure}
\vskip 0.5cm
\begin{subfigure}[t]{0.3\textwidth}
\centering
\ConfGrid
\caption{Grid}\label{sf:grid}
\end{subfigure}
\begin{subfigure}[t]{0.3\textwidth}
\centering
\ConfFano
\caption{Fano}\label{sf:fano}
\end{subfigure}
\begin{subfigure}[t]{0.3\textwidth}
\centering
\ConfMobiusKantor
\caption{M\"{o}bius--Kantor}\label{sf:mobiuskantor}
\end{subfigure}
\caption{Configurations}\label{fig:all_configurations}
\end{figure}

An \emph{isomorphism} between two STSs is a bijection between the point sets that preserves incidences.
An isomorphism of an STS onto itself is an \emph{automorphism}, and the set of all automorphisms of an STS forms a group under composition.
The concepts of isomorphism, automorphism, and automorphism group are similarly
defined for configurations.

A switch or a trade can be used to modify a Steiner triple system into
another Steiner triple system of the same order~\cite{O}. A \emph{cycle switch} of a Steiner
triple system is defined as follows. Suppose that an STS contains the blocks
\begin{equation}\label{eq:switch}
T_1 = \{a x_1 x_2,b x_2 x_3,a x_3 x_4,b x_4 x_5,\ldots,b x_n x_1\}.
\end{equation}
Then these blocks can be replaced by the blocks
\[
T_2 = \{b x_1 x_2,a x_2 x_3,b x_3 x_4,a x_4 x_5,\ldots,a x_n x_1\}.
\]
to get another STS.

For fixed and distinct points $a$ and $b$, and $c$ chosen so
that $\{a,b,c\} \in \mathcal{B}$, the points $V \setminus \{a,b,c\}$
can in a unique way be partitioned into sets of points that occur together with
$a$ and $b$ in sets of type $T_1$. The name
cycle switch comes from the fact that
removal of the points $a$ and $b$ from the blocks
in $T_1$ gives edges that form a cycle in a graph with vertex set
$\{x_1,x_2,\ldots,x_n\}$.

The smallest possible size of $T_1$ and $T_2$ is 4, and such a switch
is called a \emph{Pasch switch} as $T_1$ and $T_2$ are then Pasch
configurations.

A \emph{perfect} \STS{v} is a Steiner triple system, in which each possible cycle
switch involves $|T_1|=|T_2|=v-3$ blocks. Perfect Steiner triple systems indeed
exist; see, for example,~\cite{GGM}.

When carrying out cycle switching for partial STSs, one also encounters
situations where the two points $a$ and $b$ do not induce a 2-regular graph
with vertex set $V \setminus \{a,b,c\}$, but induces a graph with vertex
degrees at most 2. Such a graph consists of cycles and paths, and switches
for paths are analogous to those for cycles. For example, the switch for the
shortest possible path (of length 1) is simply
\begin{equation}\label{eq:path}
\{a x_1 x_2\} \rightarrow \{b x_1 x_2\}.
\end{equation}

\subsection{Hypergraph Models}

Random \mbox{3-uniform} hypergraphs on $v$ vertices play a central role in our
experimental study of algorithms for constructing random Steiner triple systems of order $v$.
Specifically, we will focus on numerical values related to properties of the STSs and
compare them with the corresponding values determined for the hypergraph models.
Lacking theoretical results, computational results indicating that such values
have the same asymptotic behavior provide some evidence for quality of
the algorithms as well as the model with respect to the studied properties.

We denote the number of blocks in an \STS{v} by $b=v(v-1)/6$ and
the number of 3-subsets of the set of $v$ vertices by $w=\binom{v}{3}$,
The properties of STSs to be considered are specifically the numbers of
occurrences of configurations. For a given configuration with $v'$ points,
$b'$ blocks, and automorphism group order $|\Gamma|$, there are $v'! / |\Gamma|$
labeled specimens on $v'$ points and consequently
\begin{equation}
\label{eq:zero}
\frac{v'!\binom{v}{v'}}{|\Gamma|} = \prod_{i=0}^{v'-1} \frac{v-i}{|\Gamma|} \sim \frac{v^{v'}}{|\Gamma|}
\end{equation}
labeled specimens on $v$ points.

We consider generalizations of the two
most common models for random graphs. In the first model,
each hyperedge appears with equal probability $p$. With $p =  b/w=1/(v-2)$, the expected
number of hyperedges is $b$, and this model has been used for STSs in~\cite{K0,HO}.
In this model, the probability for a specific specimen of a configuration to occur is
\begin{equation}
\label{eq:one}
p^{b'} = (v-2)^{-b'} \sim v^{-b'}
.
\end{equation}

In the second model, a random hypergraph is chosen uniformly from the set of
all hypergraphs with $b$ hyperedges. Now the probability for a specific specimen
of a configuration to occur is
\begin{equation}
\label{eq:two}
\frac{\binom{w-b'}{b-b'}}{\binom{w}{b}}
=
\prod_{i=0}^{b'-1}\frac{b-i}{w-i}
=
\prod_{i=0}^{b'-1}\frac{v^2-v-6i}{v^3-3v^2+2v-6i}
\sim
v^{-b'}.
\end{equation}

By~\eqref{eq:zero},~\eqref{eq:one},~\eqref{eq:two}, and
linearity of expectation, the asymptotic expected number
of occurrences of the configuration is for both models
\begin{equation}
\label{eq:asymptotic}
\frac{v'! \binom{v}{v'}v^{-b'}}{|\Gamma|}
\sim
\frac{v^{v'-b'}}{|\Gamma|}.
\end{equation}

For the configurations in Figure~\ref{fig:all_configurations}, we list
in Table~\ref{tab:configuration_details} the number of blocks $b'$,
the number of points $v'$,
the order of the automorphism group $|\Gamma|$, and the right-hand side
of~\eqref{eq:asymptotic}.

\begin{table}[htbp]
\centering
\caption{Configuration data}\label{tab:configuration_details}
\begin{tabular}{lrrrr}\toprule
Name & $b'$ & $w'$ & $|\Gamma|$ & asymptotic behavior \\\midrule
Pasch & 4 & 6 & 24 & $v^2/24$ \\
Mitre & 5 & 7 & 12 & $v^2/12$ \\
Fano--line & 6 & 7 & 24 & $v/24$ \\
Crown & 6 & 8 & 2 & $v^2/2$ \\
Hexagon & 6 & 8 & 12 & $v^2/12$ \\
Prism & 6 & 9 & 12 & $v^3/12$ \\
Grid & 6 & 9 & 72 & $v^3/72$ \\\midrule
Fano & 7 & 7 & 168 & $1/168$ \\
M\"{o}bius--Kantor & 8 & 8 & 48 & $1/48$ \\\bottomrule
\end{tabular}
\end{table}

A hypergraph in which every pair of vertices is contained in at most one hyperedge
is called linear. The probability that a random linear uniform
hypergraph with a given number of vertices and hyperedges
contains a given hypergraph as a subhypergraph is determined in~\cite[Theorem~1.4]{MT},
but for random 3-uniform hypergraphs the result only applies to the case of $o(v^{3/2})$ hyperedges.

\section{Algorithms for Generating Random STSs}\label{sec:algo}

Now we shall have a look at Stinson's algorithm and Cameron's algorithm. A
key difference between these is that an STS is constructed from scratch
in Stinson's algorithm, whereas Cameron's algorithm repeatedly perturbs a
structure that is an STS or nearly an STS (in a way that will become clear
later). Only for Stinson's algorithm do
we know that every STS can be reached with a non-zero probability.

\subsection{Random Steiner Triple Systems}\label{sec:random}

We would like to develop an algorithm that constructs random Steiner triple
system of a given order, with uniform distribution over all labeled systems
of that order. If the Steiner triple systems of the given order have been
classified, then there is an obvious solution. Pick a random isomorphism
class, where the probability of choosing a class is given by the proportion of
STSs in that class, and then apply a random permutation to the point set of
the chosen isomorphism class representative. Since Steiner triple systems
have been classified for all orders up to 19, this approach does not work for
orders greater than or equal to 21. Moreover, a rather huge data structure is needed
for the more than 11 billion isomorphism classes of \STSS{19}~\cite{KO1}.
(The problem of picking an isomorphism class uniformly
at random is obviously straightforward for these parameters.)

What about orders greater than 19? In theory, the problem can be solved
for arbitrary orders: the classification problem can be solved in finite
time, whereafter the above mentioned approach can be used. Another solution,
with finite expected time but infinite worst-case time, is as follows.
Take a random $(v(v-1)/6)$-subset of blocks and check whether these form an STS. If not,
repeat the procedure.
(For the problem of picking isomorphism classes uniformly
at random, accept an STS that has been found with the probability given by
the size of the automorphism group of the STS divided by a fixed constant that is
an upper bound on the size of the automorphism groups of the STSs of the
same order). Because of implementation and/or time issues,
these methods have no practical importance.

We will next consider some methods that have been proposed for the
construction of random labeled Steiner triple systems. How to evaluate them?
Of course, one could evaluate them for small cases---up to order 19---which
are completely understood. Such work is done in~\cite{HDM}. But what if the
behavior of an algorithm changes when going from small to large orders?
One challenge when evaluating algorithms for large orders is that such
an evaluation is necessarily based on properties of the STSs and the
behavior of a property when going from small to large orders is not
necessarily well understood.

\subsection{Stinson's Algorithm}

Stinson~\cite{S0} developed a celebrated hill-climbing algorithm for
constructing \STSS{v}. This algorithm is presented as Algorithm~\ref{algo:stinson}.

\vspace{7mm}
\begin{algorithm}[H]
\DontPrintSemicolon{}
$\mathcal{B} \leftarrow \emptyset$\;
\While{$|\mathcal{B}| < v(v-1)/6$}{
choose a point $x$ that appears in fewer than $(v-1)/2$ blocks of $\mathcal{B}$\;
choose a point $y$ such that $\{x,y\}$ is not a subset of any block of $\mathcal{B}$\;
choose a point $z\ne y$ such that $\{x,z\}$ is not a subset of any block of $\mathcal{B}$\;
\If{there is a block $B \in \mathcal{B}$ such that $\{y,z\} \subseteq B$}{
$\mathcal{B} \leftarrow \mathcal{B} \setminus \{B\}$\;
}
$\mathcal{B} \leftarrow \mathcal{B} \cup \{\{x,y,z\}\}$\;
}
\vspace*{5mm}
\caption{Stinson's hill-climbing algorithm}\label{algo:stinson}
\end{algorithm}

\vspace{7mm}

The distribution of STSs generated by Stinson's algorithm is not known. Experiments
showing that the distribution is not uniform for small orders~\cite{S0,HDM} have perhaps
discouraged people from further investigation. However, as argued in Section~\ref{sec:random},
the situation may change with growing order, and we shall later
elaborate on this.

There exist situations where Stinson's algorithm does not terminate, as shown by
the example in~\cite[p.~37]{CR} for \STSS{15}. That example can easily be generalized to
an infinite family.

\begin{theorem}\label{thm:stuck}
Let $v=6n+3$ with $n \equiv 2 \pmod{3}$. Then there is a non-zero probability
that an execution of Stinson's algorithm in the search for an \STS{v} will
not terminate.
\end{theorem}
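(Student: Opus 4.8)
The plan is to exhibit, for each such $v$, a partial STS that can arise with non-zero probability during an execution of Stinson's algorithm and from which the algorithm cannot escape — that is, a configuration of blocks such that every legal choice of $x$, $y$, $z$ in the while-loop either recreates a block already effectively forced or leads back into the same stuck situation, so that $|\mathcal{B}|$ never reaches $v(v-1)/6$. Concretely, I would take the \STS{15} example on \cite[p.~37]{CR} — a partial \STS{15} on which Stinson's algorithm loops forever — and describe explicitly how it generalizes. The idea is that the obstruction in that example is local: a small set of points, say a subset $S$ of constant size, becomes ``saturated'' in a way that makes it impossible to place any new block through the remaining deficient point(s) without deleting a block and re-entering the loop, while the points outside $S$ play no role in the deadlock.

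First I would recall the structure of the \STS{15} counterexample and identify the combinatorial feature responsible for non-termination; I expect it to be something like a point $x$ that still needs a block, together with a set of points such that every pair $\{y,z\}$ available to complete a block through $x$ is already covered, and every block covering such a pair, when removed, immediately reproduces an isomorphic deadlock. Second, I would show this configuration extends to $v = 6n+3$ with $n \equiv 2 \pmod 3$: the condition $n \equiv 2 \pmod 3$ (equivalently $v \equiv 15 \pmod{18}$) is presumably exactly what is needed to tile or pad the complement of $S$ with a sub-design (a resolvable structure or a smaller STS on the remaining points, which exists precisely under this congruence) so that all pairs not involving the deadlock are covered by genuine blocks and the partial system is otherwise ``as complete as possible.'' Third, I would argue that this partial STS is reachable: since every intermediate partial system built by Stinson's algorithm with all degree constraints respected occurs with positive probability (each of the finitely many random choices at each step has positive probability, and there is a finite sequence of choices producing the target configuration), the bad configuration arises with probability bounded below by a positive constant. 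Concluding, once the algorithm is in that state it stays there forever, so it fails to terminate with non-zero probability.

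The main obstacle I anticipate is the second step: verifying that the \STS{15} obstruction really does lift to the whole family, i.e.\ pinning down precisely which auxiliary design on $v - |S|$ points is needed and checking that it exists exactly when $n \equiv 2 \pmod 3$. This is where the arithmetic condition must be reverse-engineered from an existence theorem for the padding structure (most likely a Kirkman-type or near-one-factorization existence result), and one has to be careful that the blocks used for padding do not accidentally create a completable situation or destroy the deadlock. A secondary, more routine obstacle is the bookkeeping needed to confirm that \emph{every} legal move out of the stuck configuration leads back to a stuck configuration — this requires a short case analysis over the possible choices of $x$, $y$, $z$, but since the deadlock is localized to the constant-size set $S$, the number of cases does not grow with $v$ and the analysis reduces to the one already implicit in the \STS{15} example.
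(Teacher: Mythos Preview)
Your proposal rests on a mistaken picture of the \STS{15} example. That example is not a local deadlock on a constant-size set $S$ with the remaining points padded by an auxiliary design; it is a transversal design $\operatorname{TD}(3,5)$, and the correct generalization to $v=6n+3$ is simply a $\operatorname{TD}(3,2n+1)$ (which exists for every $n$, being equivalent to a Latin square of order $2n+1$). The obstruction is global, not local: once the partial STS equals a $\operatorname{TD}(3,2n+1)$, every pair of points from distinct groups is already covered, so for any deficient point $x$ the admissible $y$ and $z$ must lie in the same group as $x$. Since no block of the transversal design contains two points from one group, no TD block is ever removed, and the algorithm is thereafter confined to building, independently within each group of size $2n+1$, a triple system covering all intra-group pairs.

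This is where the arithmetic condition enters, and it is a \emph{non}-existence condition rather than the existence-of-padding condition you conjectured: the number of pairs inside a group is $\binom{2n+1}{2}=2n^2+n$, which fails to be divisible by $3$ precisely when $n\equiv 2\pmod 3$ (equivalently, $2n+1\equiv 5\pmod 6$ is not an admissible STS order). Hence no set of triples can cover all intra-group pairs, the partial STS can never be completed, and --- since any fixed partial STS is reached with positive probability, as you correctly note --- the algorithm fails to terminate with positive probability. Your proposed case analysis over moves out of a small set $S$, and your search for a Kirkman-type padding design whose existence is governed by $n\equiv 2\pmod 3$, are therefore aimed at the wrong targets and would not lead to a proof.
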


\begin{proof}
For any partial STS, there is a non-zero probability of encountering it
during the execution of the algorithm. Consider the situation where
the partial STS is a $\operatorname{TD}(3,2n+1)$ transversal design.
After choosing $x$, the points
$y$ and $z$ necessarily come from the same group as $x$ and hence no blocks of the transversal
design are ever removed. As the number of 2-subsets of points in a group is
$2n^2+n$, which is not divisible by 3 when $n \equiv 2 \pmod{3}$, the partial STS cannot
be completed.
\end{proof}

Although rare, situations of this kind are
encountered in practice.
An obvious way of dealing with a situation such as that in Theorem~\ref{thm:stuck}
is to set a limit on
the number of executions of the \textbf{while} loop~\cite{S0,CR,HDM}. Obviously, at least $b=v(v-1)/6 = \Theta(v^2)$ iterations are needed with Stinson's algorithm.
The data structure for maintaining partial STSs can
be implemented so that execution of any line in Algorithm~\ref{algo:stinson}
takes constant time, cf.~\cite{S0}.
There is empirical evidence~\cite{S0} that Stinson's algorithm with a limit on the
number of steps needs $\Theta(v^2 \log v)$ time on the average to construct STSs.

\subsection{Modifications of Stinson's Algorithm}

One weakness of Stinson's algorithm is that the distribution of
constructed STSs is not uniform for small orders. In particular, \STSS{v} with a large
number of Pasch configurations are underrepresented. Heap, Danziger, and Mendelsohn~\cite{HDM}
therefore suggest an extension of Stinson's algorithm, where Pasch switches are
occasionally carried out: If the constructed STS contains at least one Pasch configuration,
then with a fixed probability a Pasch switch is carried out on a (random) Pasch configuration
and the STS thereby obtained is also output. (A related algorithm
that produces single STSs is also presented in~\cite{HDM}, but that algorithm actually
produces STSs with a different distribution; for example, STSs without Pasch configurations
are more frequent.)

In the current work, experiments were made to try to find variants of Stinson's algorithm
that would lead to a more uniform distribution of constructed STSs. Whereas the approach in~\cite{HDM}
is to modify complete STSs, the main focus here is on modifying the search algorithm itself.

Choices are made in three places in Algorithm~\ref{algo:stinson}, in lines~3 to~5.
When implementing the algorithm, one needs to specify how these choices are made.
Such details are often omitted in studies where the algorithm has been
used. One reason behind this is that statements like ``choose'' and ``choose randomly''
can be understood as choosing uniformly among candidates.
In any case, precision is essential when analyzing and comparing
variants of the algorithm.

Possible parameters for deriving probability distributions for the choice of
the point $x$ in line~3 of Algorithm~\ref{algo:stinson} are $n_q$, defined
as the number of blocks in which the point $q \in V$ already appears, and
$m_q = (v-1)/2-n_q$. For any function $f_x(i)$ with nonnegative values,
a probability distribution can now be obtained as

\begin{equation}\label{eq:choice}
p_x(q) = \frac{f_x(m_q)}{\sum_{j\in V} f_x(m_j)},
\end{equation}

\noindent
where $f_x(m_q)$ must be positive for at least one point $q$.
For example, uniform distribution over candidate points is obtained
with the signum function, $f_x(i) = \sign i$.

Obviously, nonuniform distributions may also be used for choosing
$y$ and $z$ in Algorithm~\ref{algo:stinson}. The algorithms obtained
with various distributions for $x$, $y$, and $z$ are here called
\emph{weighted Stinson's algorithms}.

An alternative possibility of changing Stinson's algorithm is to
change lines~6 to~8 of Algorithm~\ref{algo:stinson}. In that part of
the algorithm, a block
$B' = \{x,y,z\}$ is added and a block $B = \{y,z,w\}$ is possibly removed.
The situation where $B$ is removed is precisely the switch~\eqref{eq:path},
as noticed in \mbox{\cite[p.~629]{O}}.

A switch in Stinson's algorithm---removing one block and adding
another---can be done in constant time. By changing this part of the algorithm,
one may end up trading speed for a better distribution of constructed STSs. We
call algorithms where switching is done in different ways
\emph{extended Stinson's algorithms}.

\subsection{Cameron's Algorithm}\label{sec:cameron_alg}

Jacobson and Matthews~\cite{JM} published an algorithm for random generation of Latin
squares, and Cameron~\cite{C0,C1,C2} showed that a similar approach is possible
also for Steiner triple systems.
The main difference
between the two settings is that stronger theoretical results have been proved for the
Latin square algorithm.

The idea in Cameron's algorithm is to apply perturbations repeatedly to get a sequence
of structures, some of which are STSs and
some of which are not (and are called proper and improper STSs, respectively).
These structures can be defined as pairs $(V,f)$, where $V$ is a set
of points and $f$ is a function that maps \mbox{3-subsets} of points
into the set $\{-1,0,1\}$ such that at most one 3-subset is mapped
to $-1$ and
\begin{align*}
\sum_{z \in V \setminus \{x,y\}} f(\{x,y,z\}) = 1
\end{align*}
for all \mbox{2-subsets} $\{x,y\}$. We say that we have
\begin{itemize}
\item a \emph{proper} STS if $f(B) \ne -1$ for all \mbox{3-subsets} $B$,
\item an \emph{improper} STS if there is a
unique \mbox{3-subset} $B'$ such that $f(B') = -1$.
\end{itemize}
An STS as defined in Section~\ref{sec:preliminaries} is
a proper STS, where $f$ is the characteristic function of the
set of blocks. For an improper STS---which has $f(B')=-1$---the
3-subsets $B$ for which $f(B)=1$ correspond to a triple system
where each 2-subset of points occur in exactly one block, except for
the 2-subsets of $B'$, which occur
in exactly two blocks.

The possible perturbations are described in Table~\ref{tab:cameron} for
proper and improper STSs. In each case, the candidates are given by
the values of $x$, $y$, $z$, $x'$, $y'$, and $z'$ that fulfill the
conditions in line~$f$, and the new function values are given in
line~$f'$. The total number of candidates is $v(v-1)(v-3)/6$ for proper STSs
and $8$ for improper STSs~\cite{C0}. One of these is chosen uniformly at
random in the algorithm. The value of $f'(\{x',y',z'\})$ determines whether
the new structure is a proper or improper STS. Note that these transformations
are essentially about doing Pasch switches. The random walk can be implemented
so that each step takes constant time.

\begin{table}[htbp]
\caption{Perturbations for proper and improper STSs}\label{tab:cameron}
\setlength{\tabcolsep}{1pt}
\begin{center}
\begin{tabular}{r|c|ccc|ccc|c}
\multicolumn{9}{c}{Proper}\\\toprule
& $\{x,y,z\}$ & $\{x',y,z\}$ & $\{x,y',z\}$ & $\{x,y,z'\}$ & $\{x,y',z'\}$ & $\{x',y,z'\}$ & $\{x',y',z\}$ & $\{x',y',z'\}$\\\midrule
     $f$ & $0$ & $1$ & $1$ & $1$ & $0$ & $0$ & $0$ & $a \geq 0$ \\
      $f'$ & $1$ & $0$ & $0$ & $0$ & $1$ & $1$ & $1$ & $a-1$ \\\bottomrule
\multicolumn{9}{c}{}\\
\multicolumn{9}{c}{Improper}\\\toprule
& $\{x,y,z\}$ & $\{x',y,z\}$ & $\{x,y',z\}$ & $\{x,y,z'\}$ & $\{x,y',z'\}$ & $\{x',y,z'\}$ & $\{x',y',z\}$ & $\{x',y',z'\}$\\\midrule
         $f$ & $-1$ & $1$ & $1$ & $1$ & $0$ & $0$ & $0$ & $a \geq 0$ \\
         $f'$ & $0$ & $0$ & $0$ & $0$ & $1$ & $1$ & $1$ & $a-1$ \\\bottomrule
\end{tabular}
\end{center}
\end{table}

A directed graph in which the vertices are combinatorial structures and the arcs show
possible transformations from one structure into another is called a \emph{transition
graph}. Reversible transformations can be modeled with undirected graphs.
Here we may consider an undirected transition graph $G$ whose vertices are the proper
and improper STSs. Repeated perturbations that are carried out uniformly at random correspond to a
random walk (Markov chain) in $G$. It can be shown~\cite{C0}
that the unique limiting distribution of this Markov chain has the property that
all proper
Steiner triple systems in the connected component where the walk starts
have equal probability in the stationary distribution.
The central, still open, question is whether $G$ is connected; this
is the case for small orders of Steiner triple systems~\cite{C0,DGG}.

As we are only interested in proper STSs, it is useful to notice that
the subsequence of proper STSs encountered in the walk is also a Markov chain with a unique stationary
distribution that is uniform over the set of proper STSs in the same connected component; this follows
from the proof of~\cite[Theorem~4]{JM}.

Cameron's algorithm has two central parameters. First, one needs to choose the starting
point of the random walk. One can then use an STS coming from some construction
or even use Stinson's algorithm to construct one STS. Clearly, this STS
fixes the connected component of the transition graph. In the experiments of the
current work, the initial STS was constructed with Stinson's algorithm. If there
would be more than one connected component for some parameters, this would be a dilemma
for Cameron's algorithm, as the choice of starting points would play a central role.
Generating starting points with the right distribution between components seems no easier than
generating uniformly distributed random STSs.

Second, one needs to specify which proper STSs in the Markov chain to output.
We shall get back to the question of convergence to the limiting distribution
in Section~\ref{sec:results}.

\subsection{Modifications of Cameron's Algorithm}

For Cameron's algorithm, it is natural to address the possibility
of considering only proper STSs and perturbations of those. Indeed, it
is shown in~\cite{JM} that the corresponding algorithm for Latin squares
can be modified so that there are no improper intermediate structures.
But a similar result for Steiner triple systems is missing: the
problem of turning Cameron's unmodified algorithm into a Markov
chain whose states are precisely the proper STSs of a fixed order is open.

For (proper) Steiner triple systems one may develop a different algorithm, where in
each step a cycle switch is carried out. Connectivity of the transition
graph is also now the main issue, and here we do know that the graph is
not connected as perfect Steiner triple systems necessarily stay perfect.
But we do arrive at a uniform stationary distribution within a connected
component by defining the switches in the following way.

Choose, uniformly at random, a 2-subset of points $\{a,b\}$ and
a point $x$, $x \not\in \{a,b\}$. If there is a block $\{a,b,x\}$,
then we do nothing; this loop in the transition graph ensures
that the Markov chain is aperiodic. Otherwise we carry out the
cycle switch with $a$ and $b$ as in~\eqref{eq:switch} and
$x \in \{x_1,x_2,\ldots ,x_n\}$. Some switches lead to the same
transition, which can be handled in the framework of multigraphs (or
graphs with weighted edges; this is relevant for the modeling of
probabilities of transitions). It is an interesting question whether
components of the transition graph have odd cycles, which is about
aperiodicity after removing the loops.

\section{Experimental Results}\label{sec:results}

\subsection{Constructing Small Steiner Triple Systems}

As there is a unique \STS{7}, a unique \STS{9}, and two isomorphism classes
of \STSS{13}, the smallest order
that can be used for evaluation of algorithms is 13.
(In~\cite{HDM}, the experimental work was done for orders 15 and 19.)
The two \STSS{13}---here called $S_1$ and $S_2$---have automorphism groups of orders 6 and 39,
that is, there are $13!/6 = 1\,037\,836\,800$ and $13!/39 = 159\,667\,200$
labeled such systems, so $13/15 = 0.8666\ldots$ of the labeled
\STSS{13} belong to the former set and $2/15 = 0.1333\ldots$ to the latter.
The numbers of Pasch configurations in $S_1$ and $S_2$ are 8 and 13, respectively.

To simplify the study of this case, we will here group the labeled STSs into
(the two) isomorphism classes and focus on those.

\subsubsection{Stinson's Algorithm}\label{sec:stinson}

There are two main open questions regarding Stinson's algorithm.
Why does Stinson's algorithm seemingly not generate Steiner triple systems
uniformly at random for small parameters? And how could Stinson's algorithm
be modified to get better distributions for small parameters? A partial answer to
the second question is provided in~\cite{HDM}.

Already Stinson in his seminal paper~\cite{S0} noticed an underrepresentation
of Steiner triple systems with large isomorphism groups in experiments.
Later studies such as~\cite{HDM} mention an underrepresentation of Steiner triple systems
with many Pasch configurations. Notice, however, that
by~\cite[Table~1.29]{MR} there is a correlation between the group order
and the number of Pasch configurations for order 15, a commonly considered
case in experimental studies.

Let us now consider results for variants of Stinson's algorithm.
We study \STSS{13} and denote the number of STSs in an isomorphism class $S$ found in
a set of experiments by $n(S)$. Hence, an algorithm that generates
uniformly distributed \STSS{13} should have
$n(S_1) / (n(S_1) + n(S_2)) \approx 13/15$.
When tabulating such results, we present the percent error
\begin{align*}
100 \cdot \left|\frac{\frac{n(S_1)}{n(S_1)+n(S_2)}-13/15}{13/15}\right|
.
\end{align*}

We modify Stinson's algorithm with five parameters and each modified algorithm is used to generate $10^8$ \STSS{13} in this comparison.
Three parameters are for weighted Stinson's algorithms.
For $i \in \{x,y,z\}$, $W_i = 0$, 1, and 2 mean that the probability
distribution $f_i(j)$ is given by~\eqref{eq:choice} and
$f_i(j) = \sign j$, $f_i(j) = j$, and $f_i(j) = \binom{j}{2}$, respectively.
Stinson's original algorithm corresponds to the case $W_x=W_y=W_z=0$.

Experimental results for weighted Stinson's algorithms are shown in
Table~\ref{tab:weighted}, omitting some cases due to symmetry.

\begin{table}[htbp]
\caption{Weighted Stinson's algorithms}\label{tab:weighted}
\centering
\begin{tabular}{r|rrr|rrr|rrr}
\toprule
$W_x$ & \multicolumn{3}{r|}{0}  & \multicolumn{3}{r|}{1}  & \multicolumn{3}{r}{2} \\
\midrule
$W_y$ & 0 & 1 & 2 & 0 & 1 & 2 & 0 & 1 & 2 \\
\midrule
0 &        3.66 & 3.68 & 3.71 & 3.44 & 3.42 & 3.47 & 2.98 & 3.00 & 3.05 \\
$W_z$ 1 &       & 3.72 & 3.74 &      & 3.48 & 3.47 &      & 3.04 & 3.06 \\
2 &             &      & 3.76 &      &      & 3.46 &      &      & 3.10 \\
\bottomrule
\end{tabular}
\end{table}

The best result is obtained
with the variant $W_x=2$, $W_y=W_z=0$, which cuts roughly
20\% of the deviation of the original algorithm.

The remaining two parameters give different extended Stinson's
algorithms. Different ways of additional switching was considered.
The best result was obtained by carrying out an additional switch
after each execution of the {\bf while} loop of Algorithm~\ref{algo:stinson},
so only the results from that approach will be tabulated here.
As discussed in Section~\ref{sect:switch}, a switch is carried out with respect
to the two points $a$ and $b$. Further consider a point $d$ so that
$\{a,b,d\}$ is \emph{not} a block; the point $d$ gives the cycle
or path in which switching is carried out.

The two parameters for extended Stinson's algorithms are as follows. The values
of $a$, $b$, and $d$ are chosen uniformly with these restrictions.

\begin{itemize}
\item[$O$] $|\{x,y,z\} \cap \{a,b,d\}| \ge O$
\item[$I$] No impact for $I=0$, $y=a$ for $I=1$, and $y=d$ for $I=2$
\end{itemize}

Note that if $I \ne 0$, then $|\{x,y,z\} \cap \{a,b,d\}| \ge 1$ and the cases
$O=0$ and $O=1$ coincide; we therefore omit the latter case.

Table~\ref{tab:weightedandextended} shows computational results for the ten best performing parameter settings.
The overall best variant of Table~\ref{tab:weightedandextended} cuts as much as 99.95\% of the deviation of the original algorithm.
Note that although the results for weighted Stinson's algorithms in Table~\ref{tab:weighted} suggest that it is beneficial to set $W_x=2$, the best variant with this property is just fourth in the list.
All other tested variants were inferior to those presented here.

\begin{table}[htbp]
\caption{Weighted and extended Stinson's algorithms}\label{tab:weightedandextended}
\setlength{\tabcolsep}{5pt}
\centering
\begin{tabular}{rrrrr|l}
\toprule
$W_x$ & $W_y$ & $W_z$ & $O$ & $I$ & percent error\\
\midrule
0 & 0 & 2 & 2 & 2 & 0.002 \\
1 & 0 & 2 & 2 & 2 & 0.005 \\
0 & 1 & 1 & 0 & 2 & 0.011 \\
2 & 1 & 2 & 0 & 2 & 0.013 \\
2 & 1 & 0 & 1 & 0 & 0.014 \\
0 & 2 & 0 & 0 & 2 & 0.016 \\
2 & 0 & 2 & 2 & 2 & 0.019 \\
2 & 1 & 0 & 0 & 2 & 0.020 \\
0 & 2 & 1 & 0 & 2 & 0.031 \\
1 & 1 & 0 & 2 & 2 & 0.037 \\
\bottomrule
\end{tabular}
\end{table}

A major drawback with extended Stinson's algorithms is that the empirical run time
increases by a factor of $v$ to $\Theta(v^3 \log v)$. One remedy for this increase
could be to carry out cycle switching only when the STS is almost complete.

Returning to the question about why Stinson's algorithm behaves as it does,
we present data on partial \STSS{13} in Tables~\ref{tab:partial13a}
and~\ref{tab:partial13b}, the latter also containing some experimental results.
In Table~\ref{tab:partial13a}, we give the number of blocks $k$ and the number
of labeled partial \STSS{13} that can be completed only to $S_1$, to
both $S_1$ and $S_2$, and only to $S_2$ in columns
$N_1$, $N_{12}$, and $N_2$, respectively. Finally, we give
the total number of isomorphism classes in the aforementioned three groups $P'$
as well as the total number of isomorphism classes of partial \STSS{13} $P$.
The algorithms used to get these numbers are standard~\cite{KO2}: removing blocks
from \STSS{13} and building up partial \STSS{13} block by block, with isomorph
rejection.

\begin{table}[htbp]
\caption{Partial STS(13)s}\label{tab:partial13a}
\centering
{\footnotesize
\begin{tabular}{rrrrrrrrr}\toprule
$k$ & $N_1$ & $N_{12}$ & $N_2$ & $P'$ & $P$\\\midrule
 1& 0               & 286             & 0                & 1      & 1         \\
 2& 0               & 36465           & 0                & 2      & 2         \\
 3& 0               & 2748460         & 0                & 5      & 5         \\
 4& 0               & 136963255       & 0                & 16     & 16        \\
 5& 2522520         & 4777742970      & 0                & 53     & 54        \\
 6& 539458920       & 119432267955    & 0                & 232    & 250       \\
 7& 50727877200     & 2112851941200   & 0                & 1259   & 1419      \\
 8& 2867932267200   & 24450299874000  & 0                & 7843   & 9768      \\
 9& 75719643199200  & 152456669164800 & 272691619200     & 46338  & 71311     \\
10& 657490838724720 & 483068044499040 & 10805956761600   & 201856 & 482568    \\
11& 2411136555347520& 893688887931840 & 86263438060800   & 562120 & 2729981   \\
12& 5003137094155200& 1109870976614400& 295345520870400  & 1041051& 12142983  \\
13& 7144211666592000& 1012594014432000& 589917494073600  & 1411388& 41023224  \\
14& 7818601132742400& 711933731308800 & 810269287027200  & 1504182& 103043009 \\
15& 6910864959398400& 394063519449600 & 834134344243200  & 1310150& 189057254 \\
16& 5054265734918400& 172910875737600 & 674036601638400  & 950220 & 248583304 \\
17& 3088955181312000& 59952718425600  & 438749658547200  & 578055 & 228896680 \\
18& 1580512841107200& 16199594611200  & 233224761369600  & 295188 & 143386618 \\
19& 674387390476800 & 3313812902400   & 101714233420800  & 126036 & 58893945  \\
20& 237726897408000 & 485707622400    & 36274471833600   & 44573  & 15142370  \\
21& 68154742656000  & 45664819200     & 10457243596800   & 12875  & 2306220   \\
22& 15510470976000  & 2075673600      & 2384948966400    & 2994   & 197746    \\
23& 2698375680000   & 0               & 415134720000     & 548    & 9348      \\
24& 337296960000    & 0               & 51891840000      & 76     & 267       \\
25& 26983756800     & 0               & 4151347200       & 10     & 10        \\
26& 1037836800      & 0               & 159667200        & 2      & 2         \\\bottomrule
\end{tabular}
}
\end{table}

Table~\ref{tab:partial13b} contains some of the data from
Table~\ref{tab:partial13a} in processed form as well as some computational
results. Specifically, $p_X := N_X/(N_1+N_{12}+N_2)$.
In the columns $q_X$, we show the distribution of partial
STSs obtained experimentally with Algorithm~\ref{algo:stinson}
in $10^6$ runs. Exact
values of $q_X$ could also be obtained analytically in the context of
absorbing Markov chains, but such an approach seems impracticable here
due to the very high number of states (partial STSs).

\begin{table}[htbp]
\caption{Data for partial STS(13)s}\label{tab:partial13b}
\begin{center}
\setlength{\tabcolsep}{5pt}
\begin{tabular}{r@{\hskip 0.9cm}ccc@{\hskip 0.9cm}ccc}\toprule
$k$ & $p_1$ & $p_{12}$ & $p_2$ & $q_1$ & $q_{12}$ & $q_2$\\\midrule
 1 & 0.0000 &1.0000 &0.0000 & 0.0000 &1.0000 &0.0000\\
 2 & 0.0000 &1.0000 &0.0000 & 0.0000 &1.0000 &0.0000\\
 3 & 0.0000 &1.0000 &0.0000 & 0.0000 &1.0000 &0.0000\\
 4 & 0.0000 &1.0000 &0.0000 & 0.0000 &1.0000 &0.0000\\
 5 & 0.0005 &0.9995 &0.0000 & 0.0005 &0.9995 &0.0000\\
 6 & 0.0045 &0.9955 &0.0000 & 0.0043 &0.9957 &0.0000\\
 7 & 0.0234 &0.9766 &0.0000 & 0.0221 &0.9779 &0.0000\\
 8 & 0.1050 &0.8950 &0.0000 & 0.0913 &0.9087 &0.0000\\
 9 & 0.3315 &0.6674 &0.0012 & 0.2775 &0.7219 &0.0006\\
10 & 0.5711 &0.4196 &0.0094 & 0.4898 &0.5054 &0.0048\\
11 & 0.7110 &0.2635 &0.0254 & 0.6267 &0.3599 &0.0134\\
12 & 0.7807 &0.1732 &0.0461 & 0.7088 &0.2666 &0.0246\\
13 & 0.8168 &0.1158 &0.0674 & 0.7627 &0.1989 &0.0384\\
14 & 0.8370 &0.0762 &0.0867 & 0.7977 &0.1488 &0.0535\\
15 & 0.8491 &0.0484 &0.1025 & 0.8194 &0.1107 &0.0699\\
16 & 0.8565 &0.0293 &0.1142 & 0.8365 &0.0786 &0.0849\\
17 & 0.8610 &0.0167 &0.1223 & 0.8437 &0.0572 &0.0991\\
18 & 0.8637 &0.0089 &0.1274 & 0.8540 &0.0361 &0.1099\\
19 & 0.8652 &0.0043 &0.1305 & 0.8587 &0.0223 &0.1190\\
20 & 0.8661 &0.0018 &0.1322 & 0.8644 &0.0119 &0.1237\\
21 & 0.8665 &0.0006 &0.1329 & 0.8656 &0.0051 &0.1293\\
22 & 0.8666 &0.0001 &0.1333 & 0.8717 &0.0020 &0.1263\\
23 & 0.8667 &0.0000 &0.1333 & 0.8762 &0.0000 &0.1238\\
24 & 0.8667 &0.0000 &0.1333 & 0.8828 &0.0000 &0.1172\\
25 & 0.8667 &0.0000 &0.1333 & 0.8985 &0.0000 &0.1015\\
26 & 0.8667 &0.0000 &0.1333 & 0.8985 &0.0000 &0.1015\\\bottomrule
\end{tabular}
\end{center}
\end{table}

Notably, the skew distribution emerges in late stages. For
example, the experimentally obtained distribution for 21-block partial
STSs is close to the theoretically obtained one.

\subsubsection{Cameron's Algorithm}

Considering the case of \STSS{13},
the subsequence of isomorphism classes of proper STSs encountered by
Cameron's algorithm can be modeled as a Markov chain with two states, which
we also call $S_1$ and $S_2$.
We denote the transition probabilities
$S_1 \rightarrow S_2$ and $S_2 \rightarrow S_1$ by $p$ and $q$, respectively,
and the limiting, stationary distribution by $(\pi(1),\pi(2))$.
Determining $p$ and $q$ analytically seems challenging, but we can get
estimates with computational experiments. In a Markov chain with $10^9$
proper STSs, we get estimates $p \approx 0.108954$ and $q \approx 0.708279$.
A two-state Markov chain with such transition probabilities
converges to a probability
distribution $(q/(p+q) \approx 0.86668$, $p/(p+q)\approx0.13332)$, which is
reasonably close to $(\pi(1)=13/15,\pi(2)=2/15)$.

The values of $p$ and $q$ can also be used to determine the convergence
rate. Let $\mu_t(i)$ denote the probability that we are in state
$i$ at time $t$. Regardless of the initial state,
\[
|\mu_t(i)-\pi(i)|<|1-p-q|^t<0.2^t\mbox{\ for\ }i \in \{1,2\};
\]
see, for example,~\cite{R}.
That is, we have exponential convergence.
In fact, all Markov chains that are indecomposable and aperiodic
converge exponentially quickly~\cite{R}. Unfortunately, a more formal
treatment gets difficult for our problem with STSs of order greater than 13.

\subsection{Constructing Large Steiner Triple Systems}

The fact that Stinson's algorithm apparently does not
perform optimally for \STSS{13} and other small parameters does not
necessarily mean that the same holds for larger parameters.
Although we do conjecture that Stinson's algorithm performs nonoptimally
with respect to certain individual STSs for any order,
we do not exclude the possibility that the obtained distribution
converges in some manner to the desired distribution. (We refrain
from formulating this statement explicitly, which would also require
picking an appropriate measure for comparing distributions.)
Nevertheless we do have some experimental results showing that the situation
may be different for small and large orders.

We implemented Stinson's algorithm and Cameron's algorithm and carried out
a set of experiments for a wide range of orders. In Cameron's algorithm,
every $v$th proper \STS{v} was output. For each order up to 200, $10^6$
random STSs were generated and the numbers of the configurations listed
in Figure~\ref{fig:all_configurations} were counted using algorithms from~\cite{HO2}.
The results after dividing by the right-hand side of~\eqref{eq:asymptotic} are
shown in Figures~\ref{fig:subP} to~\ref{fig:sub8}.
The continuous lines in Figures~\ref{fig:subP} to~\ref{fig:subG} show the
minimum, mean, and maximum values of the underlying distribution.
The boxplots show the first quartile, median, and third quartile, and the whiskers show
the furthest datapoints of the distribution which fall into $1.5$ times
the interquartile range. As most STSs contain neither Fano nor M\"{o}bius--Kantor
configurations, only the mean value divided by the right-hand side
of~\eqref{eq:asymptotic} is presented in Figures~\ref{fig:sub7}
and~\ref{fig:sub8}.

It can be seen that the degree of the monomials in Table~\ref{tab:configuration_details}
has an impact on convergence. The cases where it is not 2 are
Fano--line (degree 1; Figure~\ref{fig:subF}) and prism and grid
(degree 3; Figures~\ref{fig:subR} and~\ref{fig:subG}).
For the Pasch, mitre, crown, and hexagon, the curves seem to converge
very nicely to 1. These are precisely the configurations with a
quadratic polynomial in Table~\ref{tab:configuration_details}.
Also in the cases of Fano--line, prism, grid, Fano,
and M\"obius--Kantor, 1 is approached, but a consideration of larger parameters
would be needed to study the behavior more closely.

The only noticeable difference between Stinson's algorithm and Cameron's algorithm is
for Fano planes in STSs of small order (Figure~\ref{fig:sub7}). The curves cannot be
used to draw conclusions, but this hints that STSs with Fano planes might be
underrepresented for small parameters when Stinson's algorithm is used.

\section{Conclusions}\label{sec:conclusions}

The current study does not show clear evidence in favor of any of the two studied algorithms for
large orders, and both have their advantages and disadvantages. One disadvantage
of Cameron's algorithm is that one might end up having to implement both algorithms.
The authors hope that the current results inspire work on alternative approaches
for evaluating algorithms for constructing random Steiner triple systems of large orders.
Moreover, the experimental results linking the hypergraph models and the behavior of the
algorithms motivate theoretical work in this area.

\begin{figure}
\centering
\ImgPasch
\caption{Pasch}\label{fig:subP}
\end{figure}

\begin{figure}
\centering
\ImgMitre
\caption{Mitre}\label{fig:subM}
\end{figure}

\begin{figure}
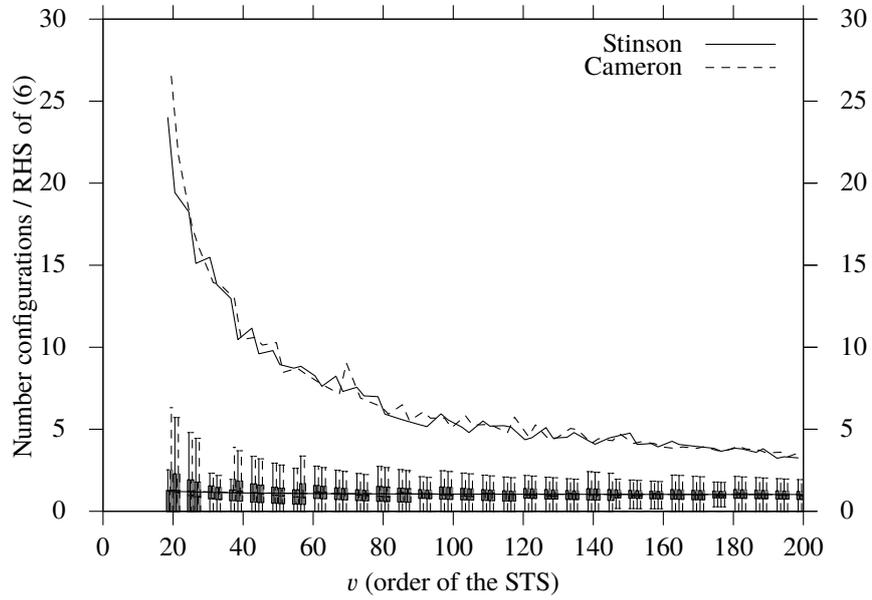

\centering
\ImgFanoLine
\caption{Fano--line}\label{fig:subF}
\end{figure}

\begin{figure}
\centering
\ImgCrown
\caption{Crown}\label{fig:subC}
\end{figure}

\begin{figure}
\centering
\ImgHexagon
\caption{Hexagon}\label{fig:subH}
\end{figure}

\begin{figure}
\centering
\ImgPrism
\caption{Prism}\label{fig:subR}
\end{figure}

\begin{figure}
\centering
\ImgGrid
\caption{Grid}\label{fig:subG}
\end{figure}

\begin{figure}
\centering
\ImgFano
\caption{Fano}\label{fig:sub7}
\end{figure}

\begin{figure}
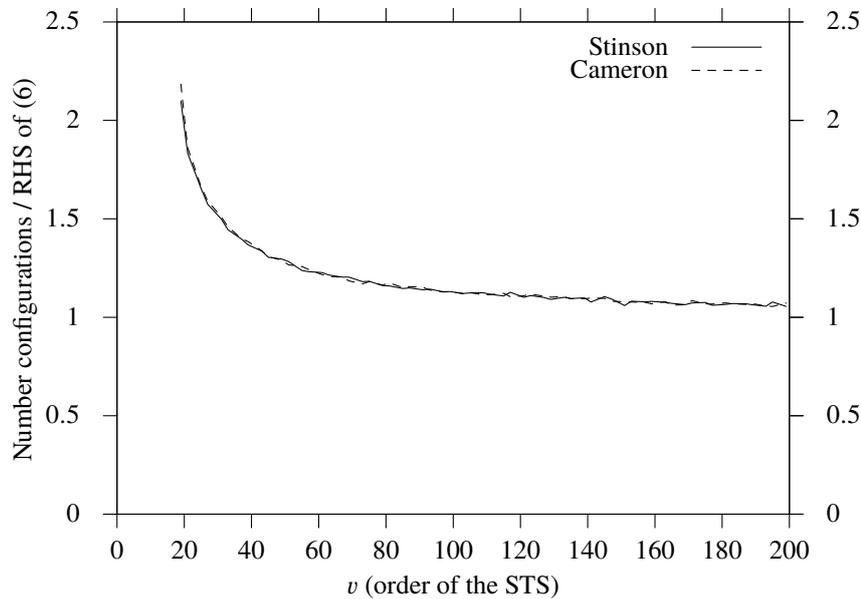

\centering
\ImgMobiusKantor
\caption{M\"{o}bius--Kantor}\label{fig:sub8}
\end{figure}

\section*{Acknowledgements}
The authors are grateful to the anonymous referees for valuable comments.


\begin{thebibliography}{XX}
\bibitem{C0} Cameron, P.J.: A Markov chain for Steiner triple systems.
(2002) Available at
\url{https://webspace.maths.qmul.ac.uk/p.j.cameron/csgnotes/random.pdf}
\bibitem{C1} Cameron, P.J.: Research problems from the 18th British
  Combinatorial Conference. {Discrete Math.} \textbf{266}, 441--451 (2003)
\bibitem{C2} Cameron, P.J.: A generalisation of $t$-designs.
  {Discrete Math.} \textbf{309}, 4835--4842 (2009)
\bibitem{C} Colbourn, C.J.: Triple systems. In: Colbourn, C.J.,
  Dinitz, J.H. (eds.) Handbook of Combinatorial Designs,
  2nd ed., pp.~58--71. Chapman \& Hall/CRC, Boca Raton (2007)
\bibitem{CR} Colbourn, C.J., Rosa, A.: Triple Systems.
  Clarendon Press, Oxford (1999)
\bibitem{DGG} Drizen, A.L., Grannell, M.J., Griggs, T.S.: Pasch trades with a negative
  block. {Discrete Math.} \textbf{311}, 2411--2416 (2011)
\bibitem{GGM} Grannell, M.J., Griggs, T.S., Murphy, J.P.: Some new perfect Steiner triple systems. {J. Combin. Des.} \textbf{7}, 327--330 (1999)
\bibitem{HDM} Heap, D., Danziger, P., Mendelsohn, E.: Hill‐climbing to Pasch valleys. {J. Combin. Des.} {\bf 15}, 405--419 (2007)
\bibitem{HO2} Heinlein, D., \"{O}sterg{\aa}rd, P.R.J.: Algorithms and complexity for counting
  configurations in Steiner triple systems. {J. Combin. Des.} {\bf 30}, 527--546 (2022)
\bibitem{HO} Heinlein, D., \"{O}sterg{\aa}rd, P.R.J.: Steiner triple systems of order
  21 with subsystems. {Glas. Mat. Ser. III}, to appear
\bibitem{JM} Jacobson, M.T., Matthews, P.: Generating uniformly distributed random Latin squares.
  {J. Combin. Des.} {\bf 4}, 405--437 (1996)
\bibitem{J} Jerrum, M.: Counting, Sampling and Integrating: Algorithms and
  Complexity. Birkh{\"a}user, Basel (2003)
\bibitem{JVV} Jerrum, M.R., Valiant, L.G., Vazirani, V.V.:
Random generation of combinatorial structures from a uniform distribution.
{Theoret. Comput. Sci.} {\bf 43}, 169--188 (1986)
\bibitem{KO1} Kaski, P., \"Osterg{\aa}rd, P.R.J.:
   The Steiner triple systems of order 19.
   {Math. Comp.} {\bf 73}, 2075--2092 (2004)
\bibitem{KO2} Kaski, P., \"Osterg{\aa}rd, P.R.J.:
  Classification Algorithms for Codes and Designs. Springer, Berlin (2006)
\bibitem{K0} Kwan, M.: Almost all Steiner triple systems have perfect
  matchings. {Proc. London Math. Soc. (3)} {\bf 121}, 1468--1495 (2020)
\bibitem{MR} Mathon, R., Rosa, A.: 2-$(v,k,\lambda)$ designs of small order.
  In: Colbourn, C.J., Dinitz, J.H. (eds.) Handbook of Combinatorial Designs,
  2nd ed., pp.~25--58. Chapman \& Hall/CRC, Boca Raton (2007)
\bibitem{MT} McKay, B.D., Tian, F.: Asymptotic enumeration of linear hypergraphs with given number of vertices and edges. {Adv. Appl. Math.} {\bf 115}, 102000 (2020)
\bibitem{O} \"Osterg{\aa}rd, P.R.J.: Switching codes and designs. {Discrete Math.}
  \textbf{312}, 621--632 (2012)
\bibitem{R} Rosenthal, J.S.: Convergence rates for Markov chains.
  {SIAM Rev.} \textbf{37}, 387--405 (1995)
\bibitem{S0} Stinson, D.R.: Hill-climbing algorithms for the construction of combinatorial designs. {Ann. Discrete Math.} {\bf 26}, 321--334 (1985)
\end{thebibliography}
\end{document}